\theoremstyle{definition}
\theoremstyle{plain}
 \newtheorem*{thm*}{Theorem}
 \newtheorem{lem}{Lemma}
 \newtheorem{prop}{Proposition}
\theoremstyle{remark}
\newtheorem{ex}{Example}
\newcommand{\End}{\mathrm{End}}
\newcommand{\Gl}{\mathrm{GL}}
\newcommand{\Id}{\mathrm{Id}}
\newcommand\Po{\mathcal{S}}
\newcommand\Poe{\widehat{\Po}}
\newcommand\Gr{\mathrm{Gr}}
\newcommand\Mi[1]{C_{#1}}
\newcommand\Ti[1]{\widehat{C_{#1}}}
\newcommand\Mp[1]{\mathbb M_{#1}}
\newcommand\Ma{\mathbb M_n}
\newcommand\dimv{\mathbf{dim}\,}
\newcommand\cdn{\mathbf{cdn}\,}
\newcommand\Q[1]{Q_{#1}}
\newcommand\QT[1]{\widehat{Q}_{#1}}
\newcommand\A[1]{\Pi_{#1}} 
\newcommand\V{\mathbf{V}} 
\newcommand\ID{\mathrm{I}} 	%Identidade por níveis
\newcommand\DD[1]{\mathcal D_#1}
\newcommand\Jx{\mathcal{J}_x} %complementar do ideal gerado por x
\newcommand\T{\mathcal{T}} %nível
\newcommand\R[2]{\mathrm{Fl}_{#2,#1}}
\newcommand\Zn{\mathbb{Z}}
\newcommand\alv{\mathbf{\boldsymbol \alpha}}
\begin{document}
\title{On dimension of poset variety}

\author{Claudia Cavalcante Fonseca}
\ead{ccf@ime.usp.br}
\author{Kostiantyn Iusenko}
\ead{iusenko@ime.usp.br}

\address{Instituto de Matem\'{a}tica e Estat\'{i}stica, Universidade de S\~{a}o Paulo, Brasil}

\begin{abstract}
For a finite partially ordered set  we calculate the dimension  of the variety of its subspace representations having fixed dimension vector. The dimension is given in terms of the Euler quadratic form associated with a partially ordered set, which gives a geometric interpretation of this form.
\end{abstract}

\begin{keyword}
Poset \sep Representations \sep Euler Quadratic form \sep Poset variety.
\MSC  15A63\sep 16G20.
\end{keyword}

\maketitle

{\centering\footnotesize Dedicated to Professor Vladimir Sergeichuk on the occasion of his 70th birthday.\par}

\section{Introduction}

Let $\Po$ be a finite partially ordered set (\textit{poset} in the sequel). All vector spaces in the current text are over a fixed algebraically closed field. Fixing an admissible dimension vector $\alv=(\alpha_0,\alpha_s)_{s\in \Po} \in \mathbb Z^{|\Po|+1}$ (see Preliminaries, for exact definition) consider a variety $\R{\alv}{\Po}$ of tuples of subspaces $(V_s)_{s\in \Po}$ of a vector space 
$V_0$ of dimension $\alpha_0$ such that $V_s\subseteq V_t,$ whenever $s\prec t$ in $\Po$ and $\dim V_s=\alpha_s$ for all $s$ in  $\Po$.
This variety was considered in \cite{futyus} as a set of all subspace representations of $\Po$ having dimension vector $\alv$. The interested reader is referred to \cite{NazarovaRoiter,simson} for information about representations of posets. The variety $\R{\alv}{\Po}$ 
can be viewed as a generalization of several important varieties. For instance: if $\Po$ is an anti-chain, then $\R{\alv}{\Po}$ is a product of Grassmannians; if $\Po$ is linearly ordered, then $\R{\alv}{\Po}$ is a partial flag variety; if $\Po$ is primitive (that is, a disjoint union of linearly ordered sets), then $\R{\alv}{\Po}$ is the multiple flag variety considered in \cite{mwz}. 

In this paper we give the dimension of $\R{\alv}{\Po}$ in terms of the Euler quadratic form $Q_\Po$ associated with $\Po$ (in particular, this answers the question posed in \cite{futyus}). Namely, we prove the following Tits-type equality:
\begin{thm*} 
For any finite poset $\Po$ and admissible vector $\alv=(\alpha_0,\alpha_s)_{s\in \Po} \in \mathbb Z^{|\Po|+1}$ one has
\begin{equation} \label{eqMain}
	\dim \Gl(\alpha_0)-\dim \R{\alv}{\Po}=Q_\Po(\alv).
\end{equation}
\end{thm*}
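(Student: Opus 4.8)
The plan is to reinterpret $\R{\alv}{\Po}$ as a quiver Grassmannian over the incidence algebra of $\Poe$ and then run a deformation–theoretic dimension count, the appearance of $Q_\Po$ being purely homological. Write $\Poe=\Po\cup\{\omega\}$ with $\omega$ a new maximum, and let $\Lambda=k\Poe$ be its incidence algebra. A tuple $(V_s)_{s\in\Po}$ of subspaces of $V_0$ with $V_s\subseteq V_t$ whenever $s\prec t$ is the same datum as a subrepresentation $N$ of the constant representation $\underline{V_0}$ of $\Lambda$ (value $V_0$ at every vertex, all structure maps the identity) whose component at $\omega$ equals $V_0$; fixing dimensions identifies $\R{\alv}{\Po}$ with $\Gr_{\alv}(\underline{V_0})$. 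The point of passing to $\Lambda$ is that $\underline{V_0}$ is injective: it is $I^{\oplus\alpha_0}$ with $I=\underline k$ the injective envelope of the simple at $\omega$.

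I first dispose of the numerics. By the deformation theory of quiver Grassmannians, at any $[N]\in\R{\alv}{\Po}$ the Zariski tangent space is $\Hom_\Lambda(N,\underline{V_0}/N)$, the obstructions lie in $\Ext^1_\Lambda(N,\underline{V_0}/N)$, and hence
\[
\dim\Hom_\Lambda(N,\underline{V_0}/N)-\dim\Ext^1_\Lambda(N,\underline{V_0}/N)\ \le\ \dim_{[N]}\R{\alv}{\Po}\ \le\ \dim\Hom_\Lambda(N,\underline{V_0}/N).
\]
The Euler form $\langle\,\cdot\,,\,\cdot\,\rangle_\Lambda$ depends only on dimension vectors, $\dimv\underline{V_0}$ has $\alpha_0$ in every coordinate, and $Q_\Po(\alv)=\langle\alv,\alv\rangle_\Lambda$; writing $\mathbf1$ for the all-ones vector we get $\langle[N],[\underline{V_0}/N]\rangle_\Lambda=\langle\alv,\alpha_0\mathbf1-\alv\rangle_\Lambda=\alpha_0\langle\alv,\mathbf1\rangle_\Lambda-Q_\Po(\alv)$. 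Since $I$ is injective, $\langle\alv,\mathbf1\rangle_\Lambda=\langle[N],[I]\rangle_\Lambda=\dim\Hom_\Lambda(N,I)=\dim N_\omega=\alpha_0$, so $\langle[N],[\underline{V_0}/N]\rangle_\Lambda=\alpha_0^2-Q_\Po(\alv)=\dim\Gl(\alpha_0)-Q_\Po(\alv)$, a quantity depending only on $\alv$. Thus the theorem is equivalent to saying that $\dim\R{\alv}{\Po}$ equals this Euler characteristic.

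Consequently the whole argument reduces to: a generic point $[N]$ of every irreducible component of $\R{\alv}{\Po}$ satisfies $\Ext^i_\Lambda(N,\underline{V_0}/N)=0$ for all $i\ge1$. Granting this, the displayed inequalities force $\dim_{[N]}\R{\alv}{\Po}=\dim\Hom_\Lambda(N,\underline{V_0}/N)$ (so $[N]$ is a smooth point), and the vanishing of the higher groups identifies $\dim\Hom_\Lambda(N,\underline{V_0}/N)$ with $\langle[N],[\underline{V_0}/N]\rangle_\Lambda$; by the previous paragraph this is $\dim\Gl(\alpha_0)-Q_\Po(\alv)$ for every component, which is the claim. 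Because $\underline{V_0}$ is injective, $0\to N\to\underline{V_0}\to\underline{V_0}/N\to0$ gives $\Ext^i_\Lambda(N,\underline{V_0}/N)\cong\Ext^{i+1}_\Lambda(N,N)$ for $i\ge1$, so what must be shown is that a generic subspace configuration of admissible dimension has vanishing self-extensions in degrees $\ge2$ (a fortiori so if its projective dimension over $\Lambda$ is $\le1$) — and this is precisely where admissibility of $\alv$ is used.

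I expect this last point to be the main obstacle. I would attack it by exhibiting a distinguished configuration: fix a linear extension $s_1,\dots,s_n$ of $\Po$ and build $N$ by choosing, in order of decreasing $k$, a generic $V_{s_k}$ inside $\bigcap_{t\succ s_k}V_t$; one then checks that for admissible $\alv$ this $N$ is homologically small — e.g.\ that the kernel of its projective cover is again a sum of the indecomposable projectives $P_x$ (the constant representations supported on the up-sets of $\Poe$), so $\mathrm{pd}_\Lambda N\le1$ — and that $[N]$ is a generic point, after which upper semicontinuity of $\dim\Ext^1_\Lambda(N,\underline{V_0}/N)$ in $[N]$ transfers the vanishing to the generic point of each component. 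The same construction also yields a direct combinatorial proof: the tower in decreasing $k$ has $k$-th relative dimension $\alpha_{s_k}(\delta_k-\alpha_{s_k})$ with $\delta_k$ the generic dimension of $\bigcap_{t\succ s_k}V_t$, admissibility pins the $\delta_k$ down to their poset-theoretic values (and makes this stratum dense), and $\sum_k\alpha_{s_k}(\delta_k-\alpha_{s_k})=\alpha_0^2-Q_\Po(\alv)$ reduces to Möbius inversion on $\Poe$. Either way, the homological content tied to admissibility is the substantive step; everything else is bookkeeping with the Euler form.
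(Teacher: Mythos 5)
Your route is genuinely different from the paper's: you recast $\R{\alv}{\Po}$ as a quiver Grassmannian $\Gr_{\alv}(\underline{V_0})$ over the incidence algebra $\Lambda=k\Poe$ and aim to identify its dimension with the Euler characteristic $\langle [N],[\underline{V_0}/N]\rangle_\Lambda$. The numerical bookkeeping is correct and rather elegant: $\underline{V_0}$ is indeed injective, $\langle\alv,\mathbf 1\rangle_\Lambda=\alpha_0$, and $\langle[N],[\underline{V_0}/N]\rangle_\Lambda=\alpha_0^2-\Q{\Po}(\alv)$ recovers exactly the right-hand side of the theorem. The paper instead argues by induction on $|\Po|$: it removes a maximal element $x$, shows the forgetful map has generic fiber $\Gr(\alpha_x-X,\alpha_0-X)$ with $X=\alv_{\DD{x}}\Mi{\DD{x}}^{-1}E_{\DD{x}}$ (Lemma 1), and matches the resulting recursion against the purely algebraic identity of Lemma 2 for $\Q{\Po}-\Q{\Po\setminus\{x\}}$. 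No homological algebra enters.

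However, your argument has a genuine gap precisely where you flag it, and the gap is not a routine verification. Everything rests on the claim that at a generic point of \emph{every} irreducible component one has $\Ext^i_\Lambda(N,\underline{V_0}/N)=0$ for $i\ge 1$, equivalently $\Ext^{\ge 2}_\Lambda(N,N)=0$. You only sketch an attack on this ("one then checks\ldots", "I would attack it by\ldots"), but this claim is the entire content of the theorem, and it is delicate: $\Lambda$ has global dimension $\ge 2$ as soon as the Hasse diagram of $\Poe$ contains a "diamond", so the vanishing is not automatic, and admissibility of $\alv$ (a condition on $\alv_\Po\Mi{\Po}^{-1}$) does not obviously control $\Ext^2(N,N)$. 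More seriously, your mechanism cannot reach all components: exhibiting one distinguished configuration $N_0$ with $\mathrm{pd}_\Lambda N_0\le 1$ and invoking semicontinuity of $\ext^1$ only governs the component containing $N_0$. On a component whose generic point has $\Ext^1_\Lambda(N,\underline{V_0}/N)\ne 0$, the only upper bound you have is the tangent-space bound $\hom_\Lambda(N,\underline{V_0}/N)$, which strictly exceeds $\alpha_0^2-\Q{\Po}(\alv)$ there, so such a component is not bounded by the claimed dimension. Since $\R{\alv}{\Po}$ can be reducible with components of different dimensions (try $\Po=\{1,2,3,4\}$ with $1,2\prec 3,4$ and configurations with $V_1=V_2$ versus $V_1\ne V_2$), this is not a removable technicality: you would need a separate argument that every component is dominated by coordinate-type configurations, or an explicit dimension bound for the bad loci, before the deformation-theoretic count closes. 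As it stands, the proposal reduces the theorem to an unproved (and, on a per-component basis, questionable) homological statement, so it does not constitute a proof.
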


The importance of integral bilinear forms and integral quadratic forms for classification problems of representations of posets and quivers was first observed in the works of Gabriel \cite{gabriel} and Drozd \cite{drozd} (see also  \cite{bongartz,simsonForms2,simsonForms} and references therein). For instance, the Tits quadratic form associated with a finite quiver has both geometric and homological interpretations (a generalization of the Tits quadratic form for quivers with relations was constructed in \cite{bongartz}). An important  role for applications in representation theory of posets is played by the Tits quadratic form and the Euler quadratic form (see, for instance, \cite{simsonForms2,simsonForms,simson}). The Tits quadratic form associated with a poset is known to have both geometric and homological interpretations 
(see, \cite{drozd}, \cite[Proposition 11.93]{simson} and \cite{bfksy} for an analogue of Tits quadratic form for unitary subspace representations of posets). 
It is well-known that Euler form associated with a poset has a homological interpretation, see, for instance, \cite[Lema 4.1]{simsonForms} and \cite{simsonForms2}. Our result can be viewed as a geometric interpretation of the Euler quadratic form. 

The paper is organized as follows. In Section 2 we give definitions and some preliminary results. In Section 3 we prove the main Theorem. In Section 4 we illustrate the proof of the main result by examples, and prove several consequences of the main result.

\section{Preliminaries}

A finite poset $\Po\equiv(\Po,\preceq)$ is given by the set of elements $\{s_1,\ldots,s_n\}$ and a partial order $\preceq$. Fixing the numbering in $\Po$ 
we make the identifications $\Mp{\Po}(\Zn)\equiv\Ma(\Zn)$ and $\Zn^{\Po}\equiv\Zn^{n}$. The relation $\preceq$ is uniquely defined by the \textit{incidence matrix} $\Mi{\Po}$ of $\Po$; that is, the integral square $n\times n$ matrix 
\begin{equation} \label{incidenceMatrix}
	\Mi{\Po} = [c_{st}]_{s,t \in \Po}\in \Mp{\Po}(\Zn), \quad \mbox{with}\ c_{st}=\left \{ \begin{array}{c}
	   1,\quad \mbox{for}\ s\preceq t,\\
	   0,\quad \mbox{for}\ s\npreceq t.\\ 
	\end{array} \right.
\end{equation}
Given a poset $\Po$, by $\Poe$ we understand its enlargement by unique maximal element $0$; that is, $\Poe\equiv(\Poe,\preceq^0)$ with $\Poe \setminus \{0\}=\Po$ and the order $\preceq^0$ is  obvious. 
 
Recall that the \textit{height} $h(\Po)$ is defined as the cardinality of the longest chain in $\Po$. By $\DD{s}$ denote the following set
$\DD{s}=\{t\in \Po \ | \ t\prec s\}$. The following proposition is a variation of the well-known Theorem (e.g., \cite[Theorem 2]{mir} for the original formulation and the proof).

\begin{prop}\label{partition}
Let $\Po$ be any poset. There are uniquely defined 
subsets $\T_i \subset \Po, i\in [1,h(\Po)]$ such that $\Po = \T_{h(\Po)} \sqcup \dots \sqcup \T_1$ and:
\begin{enumerate}[(i)]
\item for any $i,j$ with $i<j$ and any $t\in \T_i$, there is $s\in \T_j$ such that $t\prec s$ in $\Po$;
\item if $(r,s) \in \T_i \times \T_j$ and $r\prec s$ in $\Po$, then $i<j$. 
\end{enumerate}
\end{prop}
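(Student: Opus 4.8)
The plan is to construct the $\T_i$ explicitly by a height function and then establish uniqueness by descending induction on $h(\Po)$. For $s\in\Po$ let $\ell(s)$ be the length (number of elements) of a longest chain of $\Po$ whose minimal element is $s$, so $1\le\ell(s)\le h(\Po)$. Along any longest chain $p_1\prec\cdots\prec p_{h(\Po)}$ one has $\ell(p_k)=h(\Po)-k+1$, so $\ell$ attains every value in $[1,h(\Po)]$. I would then \emph{define}
\[
 \T_i=\{\,s\in\Po \ : \ \ell(s)=h(\Po)-i+1\,\},\qquad i\in[1,h(\Po)],
\]
so that $\T_{h(\Po)}$ is the set of maximal elements of $\Po$ and $\T_1$ the set of bottoms of longest chains. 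Since each $s$ lies in exactly one $\T_i$ and, by the remark above, each $\T_i$ is nonempty, this is a partition of $\Po$ into $h(\Po)$ blocks.

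Next I would verify (i) and (ii). The only inequality needed is: if $r\prec s$, then prepending $r$ to a longest chain starting at $s$ gives $\ell(r)\ge\ell(s)+1$; in particular $\ell(r)>\ell(s)$, which is precisely (ii) (and, with $i=j$, shows each $\T_i$ is an antichain). For (i), fix $t\in\T_i$ and a longest chain $t=t_1\prec t_2\prec\cdots\prec t_{\ell(t)}$ from $t$. Bounding $\ell(t_k)$ below by the visible subchain $t_k\prec\cdots\prec t_{\ell(t)}$ and above via $\ell(t)=\ell(t_1)\ge\ell(t_k)+(k-1)$ (prepend $t_1,\dots,t_{k-1}$ to a longest chain from $t_k$), one gets $\ell(t_k)=\ell(t)-(k-1)$, hence $t_k\in\T_{i+k-1}$. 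So for any $j$ with $i<j\le h(\Po)$ the element $s:=t_{\,j-i+1}$ lies in $\T_j$ and satisfies $t\prec s$.

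For uniqueness, let $\T'_{h},\dots,\T'_1$ with $h:=h(\Po)$ be any family satisfying the conclusion. First I would show $\T'_h$ equals the set $M$ of maximal elements of $\Po$: if a maximal element lay in some $\T'_k$ with $k<h$, then (i) would produce an element of $\T'_h$ strictly above it, impossible; and if $s\in\T'_h$ were not maximal, any $u\succ s$ lies in some $\T'_k$ and (ii) forces $h<k$, again impossible. Thus $\T'_h=M=\T_h$. Deleting $M$ lowers the height by exactly one (a chain of $\Po\setminus M$ extends upward to a maximal element, and the bottom $h-1$ elements of a longest chain of $\Po$ lie in $\Po\setminus M$), so on $\Po\setminus M$ the truncated family $\T'_{h-1},\dots,\T'_1$ again satisfies (i) and (ii) (both are inherited from $\Po$); by induction on $h(\Po)$, with the trivial base case of an antichain, these coincide with $\T_{h-1},\dots,\T_1$.

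The step I expect to need the most care is property (i): it is what separates this decomposition from the classical Mirsky-type one (which peels \emph{minimal} elements and already fails (i)), and it rests on the rigidity $\ell(t_k)=\ell(t)-(k-1)$ along a longest chain, which is exactly what lets an element of $\T_i$ be pushed up, one level at a time, into every $\T_j$ with $j>i$. The auxiliary fact that removing all maximal elements drops the height by precisely one is routine but is what makes the inductive bookkeeping — and the count of $h(\Po)$ blocks — come out right.
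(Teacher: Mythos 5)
The paper does not actually prove this proposition: it is stated as ``a variation of the well-known Theorem'' with a pointer to Mirsky's dual of Dilworth's theorem, so there is no in-paper argument to compare against. Your proposal supplies a complete and correct proof of exactly the variation that is needed. The key choice is the right one: you level by the length of the longest chain \emph{ascending} from $s$ (equivalently, you peel off maximal elements from the top), which is what makes property (i) hold, whereas the textbook Mirsky decomposition by longest descending chains satisfies only (ii) --- your counter-observation about that is accurate. The rigidity $\ell(t_k)=\ell(t)-(k-1)$ along a longest chain from $t$, proved by the two opposite prepend/truncate bounds, correctly yields (i) for every $j\in(i,h(\Po)]$, and the monotonicity $\ell(r)>\ell(s)$ for $r\prec s$ gives (ii) and the antichain property of each block. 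The uniqueness argument (force $\T'_{h}$ to be the set of maximal elements, then induct on $\Po$ minus the maximal elements, whose height is exactly $h(\Po)-1$) is sound; the only step you leave tacit is that the canonical family of $\Po\setminus M$ coincides with the restriction of the canonical family of $\Po$, i.e.\ that $\ell_{\Po\setminus M}(s)=\ell_{\Po}(s)-1$ for non-maximal $s$, but this follows by the same prepend/truncate bounds you already use and is not a real gap.
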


Let $\mathcal X,\mathcal Y$ be any subsets of $\Po$. By
$E_{\mathcal X}$ denote the column matrix $\begin{bmatrix}1 & 1 & \dots & 1 \end{bmatrix}^{tr}$ of size $|\mathcal{X}| \times 1$ and by
$\Mi{\mathcal X,\mathcal Y}$ denote the corresponding restriction of $\Mi{\Po}$; that is, $\Mi{\mathcal X,\mathcal Y}=[c_{st}]$ is an integral $|\mathcal X|\times |\mathcal Y|$ matrix, where $c_{st}$ is defined by \eqref{incidenceMatrix} with $s\in \mathcal X$ and $t\in \mathcal Y$. 
To simplify the notation we denote $\Mi{\mathcal X,\mathcal X}$ by $\Mi{\mathcal X}$, $\Mi{\T_i,\T_j}$ by $\Mi{i,j}$ and $E_{\T_i}$ by $E_i$. In particular, it is easy to see that $\Mi{i}$ is the identity matrix of size $|\T_i|$ (as each $\T_i$ is an antichain in $\Po$) which we denote by $\ID_i$. For any $k\in [1,h(\Po)]$ denote by $\Po_k$ the poset $\T_{h(\Po)} \sqcup \dots \sqcup \T_k$. In particular, $\Po_{1}=\Po$. If $\alv\in \Zn^{\Po}$, by ${\alv}_{\mathcal X}$ we denote the restriction of $\alv$ to $\mathcal X \subseteq \Po$. Again, to simplify the notation, ${\alv}_i$ denotes ${\alv}_{\T_i}$.

We will treat $\Mi{\Po}$ and $\alv\in \Zn^{\Po}$ with respect to the partition of $\Po$ as in Proposition \ref{partition}. That is, 
\begin{equation*}
\Mi{\Po}=
\begin{bmatrix}
\ID_{h(\Po)}& 0 & \dots & 0 \\
\Mi{h(\Po)-1,h(\Po)} & \ID_{h(\Po)-1} & \dots & 0 \\
\vdots &  \vdots & \ddots & \vdots \\
\Mi{1,h(\Po)} & \Mi{1,h(\Po)-1} & \dots &  \ID_{1}
\end{bmatrix},\quad 
\alv=(\alv_{h(\Po)},\dots,\alv_{1}).
\end{equation*}
It is easily verified and well-known that $\Mi{\Po}$ has the following decomposition
\begin{equation*}
\Mi{\Po}=F_{h(\Po)-1}\cdot \ldots \cdot F_{1},
\end{equation*}
in which each $F_{i}, i\in [1,h(\Po)-1]$ is a Frobenious-like matrix having the form
\begin{equation*}
F_i=
\begin{bmatrix}
\ID_{h(\Po)}& 0 & \dots & 0 & 0 & 0 &  \dots & 0 \\
0 & \ID_{h(\Po)-1} & \dots & 0 & 0 & 0 & \dots & 0\\
\vdots &  \vdots & \ddots & \vdots & \vdots & \vdots & \ddots & \vdots\\
0 & 0 & \dots & \ID_{i+1} & 0 & 0 & \dots & 0 \\ 
\Mi{i,h(\Po)} & \Mi{i,h(\Po)-1} & \dots & \Mi{i,i+1}& \ID_{i} & 0 & \dots & 0 \\
0 & 0 & \dots & 0 & 0 & \ID_{i-1 }& \dots & 0 \\
\vdots &  \vdots & \ddots & \vdots & \vdots & \vdots & \ddots & \vdots\\
0 & 0 & \dots &  0 & 0 & 0 & \dots & \ID_{1}
\end{bmatrix}.
\end{equation*}
Therefore 
$\Mi{\Po}^{-1}=F_{1}^{-1}\cdot \ldots \cdot F_{h(\Po)-1}^{-1}$, with
\begin{equation*}
F_i^{-1}=
\begin{bmatrix}
\ID_{h(\Po)}& 0 & \dots & 0 & 0 & 0 &  \dots & 0 \\
0 & \ID_{h(\Po)-1} & \dots & 0 & 0 & 0 & \dots & 0\\
\vdots &  \vdots & \ddots & \vdots & \vdots & \vdots & \ddots & \vdots\\
0 & 0 & \dots & \ID_{i+1} & 0 & 0 & \dots & 0 \\ 
-\Mi{i,h(\Po)} & -\Mi{i,h(\Po)-1} & \dots & -\Mi{i,i+1}& \ID_{i} & 0 & \dots & 0 \\
0 & 0 & \dots & 0 & 0 & \ID_{i-1 }& \dots & 0 \\
\vdots &  \vdots & \ddots & \vdots & \vdots & \vdots & \ddots & \vdots\\
0 & 0 & \dots &  0 & 0 & 0 & \dots & \ID_{1}
\end{bmatrix}.
\end{equation*}

We say that $\alv=(\alpha_0,\alpha_s)_{s\in \Po}\in \Zn^{\Poe}$ is an \textit{admissible dimension vector} if $\alv_{\Po}\cdot \Mi{\Po}^{-1}$ is non-negative vector and $\alpha_0\geq \alpha_s$ for all $s\in \Po$. By $\A{\Po}$ denote the convex cone of all admissible dimension vectors. The following definition will be useful in the sequel. For any $\alv\in \Zn^{\Po}$ and $k\in [1,h(\Po)]$ define 
\begin{equation} \label{iterations}
\alv^{(k)}:=\alv^{(k-1)}\cdot F_{k-1}^{-1}, \quad \mbox{with}\quad \alv^{(1)}:=\alv. 
\end{equation}
Hence, it follows that 
\begin{equation} \label{iterCartan}
\alv\cdot \Mi{\Po}^{-1}=\alv\cdot F_{1}^{-1}\cdot \ldots \cdot F_{h(\Po)-1}^{-1}=\alv^{(h(\Po))},
\end{equation}
and  that $\alv_{\Po}\in \A{\Po}$ iff all vectors $\alv^{(k)}$ are non-negative.

We denote by $\QT{\Po},\Q{\Po}:\Zn^{\Poe} \to \Zn$  the Tits quadratic form, and the Euler quadratic form of ${\Poe}$ defined by the formula
\begin{equation*}
\begin{split}
\QT{\Po}(\alv)&=\alv \cdot \Ti{\Po} \cdot \alv^{tr}, \\
\Q{\Po}(\alv)&=\alv \cdot \Mi{\Poe}^{-1} \cdot \alv^{tr},
\end{split}
\end{equation*}
respectively, where 
$\Ti{\Po}=\left[
\begin{array}{c|c}
  1 & 0 \\ \hline
  -E_\Po & \Mi{\Po}^{tr}
\end{array}
\right]\in \Mp{\Poe}(\Zn)$
is the Tits matrix of $\Poe$. The reader is referred to \cite{simsonForms,simson} (and references therein) for a detailed study of quadratic and bilinear forms associated with posets and their applications for classification problems of representations of posets.

\section{Proof of the Theorem}

First we prove two auxiliary lemmas. Denote by $d_{\Po}$ the function $\R{\alv}{\Po}\to \mathbb Z$ which maps each point $(V_s)_{s\in \Po}\in \R{\alv}{\Po}$ to $\dim\Big (\sum_{y\in \T_{h(\Po)}} V_y\Big )$. It is easy to check that $d_{\Po}$ is semi-continuous. Moreover we have the following 

\begin{lem} \label{sum}
Let $\alv\in \Zn^{\Poe}$ be a vector such that $\alv\cdot \Mi{\Poe}^{-1}$ is non-negative.
There is an open subset in $\R{\alv}{\Po}$ where $d_{\Po}$ attains maximum equal to 
$\alv_{\Po}\cdot \Mi{\Po}^{-1}\cdot E_\Po$. 
\end{lem}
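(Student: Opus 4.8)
The plan is to compute the generic value of $d_\Po$ by exhibiting an explicit point of $\R{\alv}{\Po}$ where the sum $\sum_{y\in\T_{h(\Po)}}V_y$ is as large as possible, and then to argue that the value computed there is in fact the maximum. First I would observe that, since each $\T_{h(\Po)}$ is an antichain consisting of (some of the) minimal-level subspaces, the subspaces $V_y$ for $y\in\T_{h(\Po)}$ are only constrained from above by the subspaces $V_t$ for $t\succ y$, and from below by the subspaces $V_r$ for $r\prec y$; but by Proposition~\ref{partition}(ii) nothing in $\T_{h(\Po)}$ lies below anything, so the only constraints are the inclusions $V_y\subseteq V_t$ for $y\prec t$. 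The key combinatorial input is the identity $\alv_\Po\cdot\Mi\Po^{-1}\cdot E_\Po=\sum_{s\in\Po}(\alv\cdot\Mi\Po^{-1})_s$, which by the triangular form of $\Mi\Po^{-1}$ and definition \eqref{iterations} equals $\sum_{y\in\T_{h(\Po)}}\alpha_y^{(h(\Po))}$ plus lower-level corrections; I would unwind this to see that $\alv_\Po\cdot\Mi\Po^{-1}\cdot E_\Po$ is precisely the "defect-corrected" dimension count one expects for a generic sum of the top-level subspaces inside $V_0$.

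The main construction is as follows. I would build a representation $(V_s)_{s\in\Po}$ inductively, processing the levels $\T_{h(\Po)},\T_{h(\Po)-1},\dots,\T_1$ and choosing, at each step, the spaces $V_s$ to be in "general position" compatible with the already-chosen spaces below the current level (there are none below $\T_{h(\Po)}$, so one starts by choosing the $V_y$, $y\in\T_{h(\Po)}$, to be generic subspaces of $V_0$ of the prescribed dimensions, i.e.\ with pairwise intersections as small as the dimension count allows). The non-negativity hypothesis $\alv\cdot\Mi\Poe^{-1}\ge 0$ is exactly what guarantees that at each level there is enough room in $V_0$ (equivalently, in the relevant $V_t$) to place the next batch of subspaces generically; this is where admissibility enters. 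Once such a point is constructed, a dimension chase using the iterated vectors $\alv^{(k)}$ shows $\dim\sum_{y\in\T_{h(\Po)}}V_y=\alv_\Po\cdot\Mi\Po^{-1}\cdot E_\Po$ at that point. Semicontinuity of $d_\Po$ (stated just before the lemma) then gives that the locus where $d_\Po\ge\alv_\Po\cdot\Mi\Po^{-1}\cdot E_\Po$ is open, and since it is nonempty it is dense; the reverse inequality $d_\Po\le\alv_\Po\cdot\Mi\Po^{-1}\cdot E_\Po$ at \emph{every} point of $\R{\alv}{\Po}$ is the remaining content.

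For that upper bound I would argue directly on an arbitrary point $(V_s)$. For each $t\in\T_{h(\Po)-1}\sqcup\dots\sqcup\T_1$ one has $\sum_{y\in\T_{h(\Po)},\,y\prec t}V_y\subseteq V_t$, so the sum of the top-level subspaces, organized according to which $V_t$'s contain them, cannot exceed the bound obtained by a repeated application of $\dim(A+B)\le\dim A+\dim B-\dim(A\cap B)$ together with the inclusion constraints; carrying this out level by level produces exactly the alternating sum encoded by $\Mi\Po^{-1}$, i.e.\ $\alv_\Po\cdot\Mi\Po^{-1}\cdot E_\Po$. Here the role of the matrices $F_i^{-1}$ is to package the inductive step: passing from the bound "valid for the poset $\Po_{k+1}$" to "valid for $\Po_k$" is multiplication by $F_k^{-1}$ on the relevant truncation of $\alv$, and the definition \eqref{iterations} is set up precisely so that the accumulated effect is $\Mi\Po^{-1}$.

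\textbf{Main obstacle.} The delicate point is the existence half: showing that generic choices are actually \emph{simultaneously} achievable, i.e.\ that when one places the subspaces of level $\T_k$ generically subject to the inclusions into the already-fixed spaces of levels $>k$, the resulting dimensions of all intersections and sums are the expected (minimal) ones and in particular that no dimension drops below what the formula predicts. This requires checking that at each level the ambient space into which one is placing $V_s$ has dimension at least the required value — which is exactly the statement that the corresponding coordinate of some $\alv^{(j)}$ is non-negative — and that genericity can be maintained across all the simultaneous constraints (a Zariski-openness argument for the intersection of finitely many "general position" conditions). I expect this to be the technical heart of the proof; the upper bound and the semicontinuity packaging are comparatively routine.
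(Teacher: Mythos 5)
There is a genuine gap, and it lies exactly where you predicted it would: the construction of the generic point. You propose to build the tuple top--down, ``choosing the $V_y$, $y\in\T_{h(\Po)}$, to be generic subspaces of $V_0$ \ldots{} with pairwise intersections as small as the dimension count allows,'' and you assert that the non-negativity of $\alv\cdot \Mi{\Poe}^{-1}$ ``is exactly what guarantees that at each level there is enough room'' to place the lower levels inside those intersections. This is false. Take $\Po=\{1,2,3,4\}$ with $1,2\prec 3$ and $1,2\prec 4$, and $\alv=(10;1,1,3,3)$; here $\alv\cdot\Mi{\Poe}^{-1}=(6;1,1,1,1)$ is non-negative, so the hypothesis of the lemma holds. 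A pair $V_3,V_4$ chosen generically in $\Gr(3,10)\times\Gr(3,10)$ satisfies $V_3\cap V_4=0$, so no nonzero $V_1$ (let alone two distinct lines $V_1\neq V_2$) fits inside $V_3\cap V_4$: the generic top-level configuration does not extend to a point of $\R{\alv}{\Po}$ at all. The configurations that do extend have $\dim(V_3\cap V_4)\ge 2$, strictly larger than the ``smallest possible'' intersection, and it is on those that the value $\alv_{\Po}\cdot\Mi{\Po}^{-1}\cdot E_\Po=4$ is attained. In other words, the forgetful map from $\R{\alv}{\Po}$ to the product of Grassmannians indexed by the \emph{top} level is in general not dominant, so ``generic in $V_0$'' and ``generic among extendable configurations'' are different conditions, and your dimension chase is carried out at a point that need not exist. (Your proposed pointwise upper bound $d_\Po\le\alv_\Po\Mi{\Po}^{-1}E_\Po$ is also delicate: at special points where the lower-level subspaces degenerate, the top-level subspaces are less constrained and the sum can grow; the bound should be asserted only on the relevant open stratum.)

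The paper's proof runs the induction in the opposite direction, and this is what makes it work. It projects $\R{\alv}{\Po}$ onto $\prod_{s\in\T_1}\Gr(\alpha_s,\alpha_0)$, the \emph{minimal} level; this projection is genuinely surjective. Non-negativity is used only to guarantee that the minimal subspaces are generically in direct sum, contributing $\alv_1\cdot E_1$; passing to the quotient by $\sum_{s\in\T_1}V_s$ one is left with a representation of $\Po_2=\T_{h(\Po)}\sqcup\dots\sqcup\T_2$ with the adjusted dimension vector $\alv^{(2)}=\alv\cdot F_1^{-1}$, and the factorization $\Mi{\Po}^{-1}=F_1^{-1}\cdots F_{h(\Po)-1}^{-1}$ telescopes the recursion $\max d_\Po=\max d_{\Po_2}+\alv_1\cdot E_1$ into the stated formula. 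The point is that imposing ``$V_t$ contains a prescribed subspace'' after the contained spaces are fixed is always feasible, whereas imposing ``$V_s$ is contained in prescribed generic subspaces'' may not be. To repair your argument you would have to replace ``generic in $V_0$'' at the top level by ``generic subject to containing the sums of the lower-level subspaces,'' which effectively forces you back to the bottom-up order of the paper.
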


\begin{proof}
First suppose that $h(\Po)=1$. In this case $\R{\alv}{\Po}=\prod_{s\in \Po} \Gr(\alpha_s,\alpha_0)$. As  $\alv\cdot \Mi{\Poe}^{-1}$ is non-negative for $\alv=(\alpha_0;\alpha_s)_{s\in \Po}$, we have that $\sum_{s\in \Po} \alpha_s\leq \alpha_0$. Hence for general point $(V_s)_{s\in \Po}$ in $\R{\alv}{\Po}$ the sum $\sum_{s\in \Po} V_s$ is direct, therefore $\dim(\sum_{s\in \Po} V_s)=\sum_{s\in \Po}\alpha_{s}$ is a maximal value of $d_\Po$, which is equivalent to statement. 

Assume that $h(\Po)>1$. Consider the natural map $\pi:\R{\alv}{\Po} \to \prod_{s\in \T_1} \Gr(\alpha_s,\alpha_0)$ which maps the tuple $(V_s)_{s\in \Po}$ to $(V_s)_{s\in \T_1}$. The map $\pi$ is obviously surjective (as $\T_1$ is a set of minimal points in $\Po$).
Recall that $\Po_2=\T_{h(\Po)}\sqcup\dots \sqcup \T_{2}$. Abusing the notation let $\alv^{(2)}$ be a restriction of $\alv^{(2)}$ (defined by \eqref{iterations}) to $\Po_2$. Consider the variety  $\R{\alv^{(2)}}{\Po_2}$ and the corresponding function $d_{\Po_2}$. Analyzing the fibers of the map $\pi$ it is straightforward to check that 
$$
	\max d_{\Po}=\max d_{\Po_2}+\alv_1\cdot E_1.
$$
Therefore, proceeding inductively, we get
$$
\max d_{\Po}=\sum_{i=1}^{h(S)} \alv_i^{(i)} \cdot E_i=\alv^{(h(S))}_{\Po}\cdot E_{\Po}.
$$
By \eqref{iterCartan} we have $\alv^{(h(S))}\cdot E_{\Po}=\alv_{\Po}\cdot \Mi{\Po}^{-1}\cdot E_\Po$, thus the statement follows. \end{proof}

\begin{lem} \label{lem_alg}
Let $\Po$ be a poset, $x$ be a maximal element in $\Po$ and $\alv \in \Zn^{\Poe}$. Then $$\Q{\Po}(\alv) - \Q{\Po\setminus\{x\}}(\alv_{\Po\setminus\{x\}}) = - (\alpha_x-{\alv}_{\DD{x}}\cdot\Mi{\DD{x}}^{\,-1}\cdot{E_{\DD{x}}})(\alpha_0-\alpha_x).$$ 
\end{lem}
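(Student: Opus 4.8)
The plan is to compute $\Q{\Po}(\alv)$ directly from its definition $\Q{\Po}(\alv)=\alv\cdot\Mi{\Poe}^{-1}\cdot\alv^{tr}$ by understanding how the inverse incidence matrix of $\Poe$ changes when the maximal element $x$ is removed. Since $x$ is maximal in $\Po$ (hence still below only $0$ in $\Poe$), I would order the index set of $\Poe$ as $(\{0\},\Po\setminus\{x\},\{x\})$, so that $\Mi{\Poe}$ is block lower-triangular with the column of $x$ having $1$'s exactly in the rows indexed by $\{0,x\}\cup\{\,?\,\}$; more precisely the row of $x$ has $1$'s exactly in the columns indexed by $\{x\}\cup\{t:x\prec t\}=\{x,0\}$ and the column of $x$ has $1$'s exactly in rows $\{s:s\preceq x\}=\{x\}\cup\DD{x}$. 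The key algebraic fact I would establish is a Schur-complement / bordered-matrix formula: if $M=\Mi{\Poe\setminus\{x\}}$ is the incidence matrix of the poset $\Poe$ with $x$ deleted and $M$ is augmented by the row $r^{tr}$ and column $c$ corresponding to $x$ (with a $1$ in the diagonal corner), then
\[
\Mi{\Poe}^{-1}=
\begin{bmatrix} M^{-1}+M^{-1}c\,r^{tr}M^{-1} & -M^{-1}c \\ -r^{tr}M^{-1} & 1 \end{bmatrix}.
\]

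Next I would plug this into the quadratic form. Writing $\alv=(\bar\alv,\alpha_x)$ with $\bar\alv=\alv_{\Po\setminus\{x\}}$ together with the $\alpha_0$-slot folded into $\bar\alv$, the block formula gives
\[
\Q{\Po}(\alv)=\bar\alv\cdot M^{-1}\cdot\bar\alv^{tr}+(\,r^{tr}M^{-1}\bar\alv^{tr}\,)(\,\bar\alv\,M^{-1}c\,)-2\alpha_x\,(r^{tr}M^{-1}\bar\alv^{tr})+\alpha_x^2 .
\]
The first term is exactly $\Q{\Po\setminus\{x\}}(\alv_{\Po\setminus\{x\}})$, so the difference we want equals the remaining three terms. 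It therefore remains to identify the scalars $r^{tr}M^{-1}\bar\alv^{tr}$ and $\bar\alv\,M^{-1}c$. Here $c$ is the column of $x$ inside $\Mi{\Poe\setminus\{x\}}$, whose support is $\DD{x}$ (the element $0$ is not in $\Poe\setminus\{x\}$'s bottom, so one must be careful — actually $0$ is the top, and $x\prec 0$, so the column of $x$ in $\Mi{\Poe\setminus\{x\}}$ is supported on $\{x\}\cup\DD{x}$ minus $x$, i.e.\ on $\DD{x}$); and $r$ is the row of $x$, supported on $\{0\}$ since the only element strictly above $x$ is $0$. A short computation with the explicit block form of $\Mi{\Poe\setminus\{x\}}^{-1}$ — or, more cleanly, with the Frobenius factorization from the Preliminaries restricted to the down-set $\DD{x}$ — should then give $\bar\alv\,M^{-1}c=\alv_{\DD{x}}\cdot\Mi{\DD{x}}^{-1}\cdot E_{\DD{x}}$ and $r^{tr}M^{-1}\bar\alv^{tr}=\alpha_0$ (the coordinate dual to $0$ picks out $\alpha_0$ because $0$ sits alone above $x$). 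Substituting, the three leftover terms collapse to $\alpha_0\,(\alv_{\DD{x}}\cdot\Mi{\DD{x}}^{-1}\cdot E_{\DD{x}})-2\alpha_x\alpha_0+\alpha_x^2$, which I would then check is precisely $-(\alpha_x-\alv_{\DD{x}}\cdot\Mi{\DD{x}}^{-1}\cdot E_{\DD{x}})(\alpha_0-\alpha_x)$ by expanding the right-hand side.

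The main obstacle I anticipate is purely bookkeeping: keeping straight which index set each incidence matrix lives on (the ambient set is $\Poe$, not $\Po$, so $0$ is always present as a maximal element, and deleting $x$ from $\Po$ means deleting it from $\Poe$ while $0$ stays), and correctly reading off the supports of the bordering row $r$ and column $c$ from the partial order. Once the identification $r^{tr}M^{-1}\bar\alv^{tr}=\alpha_0$ and $\bar\alv M^{-1}c=\alv_{\DD{x}}\cdot\Mi{\DD{x}}^{-1}\cdot E_{\DD{x}}$ is nailed down — which is where the structure of the incidence matrix as a product of Frobenius-like matrices, already recorded in the Preliminaries, does the real work — the rest is a one-line algebraic identity.
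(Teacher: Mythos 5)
Your overall route --- realizing $\Mi{\Poe}$ as $M=\Mi{\Poe\setminus\{x\}}$ bordered by the row and column of $x$ and inverting via a Schur complement --- is sound, and is essentially a repackaging of the paper's own proof, which writes out the same block inverses explicitly for the ordering $(0,\Jx,x,\DD{x})$ and subtracts the two quadratic forms. Your two identifications $r^{tr}M^{-1}\bar{\alv}^{tr}=\alpha_0$ and $\bar{\alv}\,M^{-1}c=\alv_{\DD{x}}\cdot\Mi{\DD{x}}^{-1}\cdot E_{\DD{x}}$ are both correct. One point you should make explicit: the bordered-inverse formula you state is not the general one; it holds only when the Schur complement $1-r^{tr}M^{-1}c$ equals $1$. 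That is true here (the row of $0$ in $M^{-1}$ is again the coordinate vector of $0$, and $c$ is supported on $\DD{x}\not\ni 0$, so $r^{tr}M^{-1}c=0$), but it needs to be checked.

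The genuine error is in your expansion of the quadratic form. Since $\Mi{\Poe}^{-1}$ is not symmetric, the two cross terms are \emph{different} scalars: writing $Y=\bar{\alv}\,M^{-1}c=\alv_{\DD{x}}\cdot\Mi{\DD{x}}^{-1}\cdot E_{\DD{x}}$, the correct expansion is
\[
\Q{\Po}(\alv)=\bar{\alv}\,M^{-1}\bar{\alv}^{tr}
+\bigl(\bar{\alv}\,M^{-1}c\bigr)\bigl(r^{tr}M^{-1}\bar{\alv}^{tr}\bigr)
-\alpha_x\bigl(\bar{\alv}\,M^{-1}c\bigr)-\alpha_x\bigl(r^{tr}M^{-1}\bar{\alv}^{tr}\bigr)+\alpha_x^2
=\Q{\Po\setminus\{x\}}(\alv_{\Po\setminus\{x\}})+Y\alpha_0-\alpha_xY-\alpha_x\alpha_0+\alpha_x^2,
\]
whereas you merged the two cross terms into $-2\alpha_x\,(r^{tr}M^{-1}\bar{\alv}^{tr})=-2\alpha_x\alpha_0$. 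Your resulting leftover $\alpha_0Y-2\alpha_x\alpha_0+\alpha_x^2$ differs from the lemma's right-hand side by $\alpha_x(\alpha_0-Y)$, which is nonzero in general, so the final ``one-line check'' you defer to would fail. With the corrected cross terms the leftover is $\alpha_0Y-\alpha_xY-\alpha_x\alpha_0+\alpha_x^2=-(\alpha_x-Y)(\alpha_0-\alpha_x)$, exactly the claimed identity; so the argument is repaired by fixing this single line.
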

\begin{proof}
If $h(\Po)=1$, then the statement is clear (since $\DD{x}$ is empty and $\Mi{\Po}^{\,-1} =\Id$).
Suppose $h(\Po)\geq 2$. Write $\Po$ as $\Jx \cup \{x\} \cup \DD{x}$ and 
$\alv$ as $(\alv_0;\alv_{\Jx},\alpha_x, \alv_{\DD{x}})$, where $\Jx=\Po \setminus (\DD{x} \cup \{x\})$.   The incidence matrices of $\Poe$ and $\Poe\setminus\{x\}$ (with respect to above decomposition of $\Po$) have the form
\begin{equation*}
  \Mi{\Poe} = \begin{bmatrix} 
    			1 & 0 & 0 & 0\\
                E_{\Jx} & \Mi{\Jx} & 0 & 0\\
                1 & 0 & 1 & 0\\
                E_{\DD{x}} & \Mi{\DD{x},\Jx} & E_{\DD{x}} & \Mi{\DD{x}}
    		   \end{bmatrix}, \quad 
\Mi{\Poe\setminus\{x\}} = \begin{bmatrix} 
    			1 & 0 & 0 \\
                E_{\Jx} & \Mi{\Jx} & 0\\
                E_{\DD{x}} & \Mi{\DD{x},\Jx} & \Mi{\DD{x}}
    		   \end{bmatrix}.     
\end{equation*}
Therefore 
\begin{equation*}
\begin{split}
  \Mi{\Poe}^{\,-1}&=\begin{bmatrix} 
    			1 & 0 & 0 & 0\\
                -\Mi{\Jx}^{\,-1}E_{\Jx} & 
                \Mi{\Jx}^{\,-1} & 
                0 & 0\\
                -1 & 0 & 1 & 0\\
                \Mi{\DD{x}}^{\,-1}\Mi{\DD{x},\Jx}\Mi{\Jx}^{\,-1}E_{\Jx} & 
                 -\Mi{\DD{x}}^{\,-1}\Mi{\DD{x},\Jx}\Mi{\Jx}^{\,-1}& 
                -\Mi{\DD{x}}^{\,-1}E_{\DD{x}}& 
                \Mi{\DD{x}}^{\,-1}
    		   \end{bmatrix},\\
\Mi{\Poe\setminus\{x\}}^{\,-1}&=\begin{bmatrix} 
    			1 & 0 & 0 \\
                -\Mi{\Jx}^{\,-1}E_{\Jx} & 
                \Mi{\Jx}^{\,-1} & 0 \\
                \Mi{\DD{x}}^{\,-1}\Mi{\DD{x},\Jx}\Mi{\Jx}^{\,-1}E_{\Jx} - \Mi{\DD{x}}^{\,-1}E_{\DD{x}} & 
                -\Mi{\DD{x}}^{\,-1}\Mi{\DD{x},\Jx}\Mi{\Jx}^{\,-1}&         
                \Mi{\DD{x}}^{\,-1}
    		   \end{bmatrix}.               
\end{split}
\end{equation*}
Hence the corresponding Euler forms are:
\begin{equation*}
\begin{split}
\Q{\Po}(\alv) &=(\alv_0;\alv_{\Jx},\alpha_x, \alv_{\DD{x}})\cdot \Mi{\Poe}^{\,-1} \cdot (\alv_0;\alv_{\Jx},\alpha_x, \alv_{\DD{x}})^{tr}\\
&= (\alpha_0 - \alv_{\Jx}\cdot\Mi{\Jx}^{\,-1}\cdot E_{\Jx}-\alpha_x + \alv_{\DD{x}}\cdot \Mi{\DD{x}}^{\,-1}\cdot\Mi{\DD{x},\Jx}\cdot\Mi{\Jx}^{\,-1})\alpha_0
\\&\qquad+(\alv_{\Jx}\cdot\Mi{\Jx}^{\,-1} - \alv_{\DD{x}}\cdot\Mi{\DD{x}}^{\,-1}\cdot\Mi{\DD{x},\Jx}\cdot\Mi{\Jx}^{\,-1})\cdot\alv_{\Jx}^{tr}
\\&\qquad+(\alpha_x - \alv_{\DD{x}}\cdot\Mi{\DD{x}}^{\,-1}\cdot E_{\DD{x}})\alpha_x+\alv_{\DD{x}}\cdot\Mi{\DD{x}}^{\,-1}\cdot\alv_{\DD{x}}^{tr},
\end{split}
\end{equation*}
and
\begin{equation*}
\begin{split}
&\Q{\Po\setminus\{x\}}(\alv_{\Po\setminus\{x\}})= (\alv_0;\alv_{\Jx},\alv_{\DD{x}})\cdot \Mi{\Poe\setminus\{x\}}^{\,-1} \cdot (\alv_0;\alv_{\Jx}, \alv_{\DD{x}})^{tr}\\ 
&\quad= (\alpha_0 - \alv_{\Jx}\cdot\Mi{\Jx}^{\,-1}\cdot E_{\Jx}+ \alv_{\DD{x}}\cdot\Mi{\DD{x}}^{\,-1}\cdot\Mi{\DD{x},\Jx}\cdot\Mi{\Jx}^{\,-1} -\alv_{\DD{x}}\cdot\Mi{\DD{x}}^{-1}\cdot E_{\DD{x}})\alpha_0
\\&\qquad +(\alv_{\Jx}\cdot\Mi{\Jx}^{\,-1} - \alv_{\DD{x}}\cdot\Mi{\DD{x}}^{\,-1}\cdot\Mi{\DD{x},\Jx}\cdot\Mi{\Jx}^{\,-1})\cdot\alv_{\Jx}^{tr}+\alv_{\DD{x}}\cdot\Mi{\DD{x}}^{\,-1}\cdot\alv_{\DD{x}}^{tr}.
\end{split}
\end{equation*}
Thus, 
\begin{equation*}
\begin{split}
\Q{\Po}(\alv) - \Q{\Po\setminus\{x\}}(\alv_{\Po\setminus\{x\}}) &= -\alpha_x\alpha_0 + \alv_{\DD{x}}\cdot\Mi{\DD{x}}^{-1}\cdot E_{\DD{x}}\alpha_0 + \alpha_x^2-\alv_{\DD{x}}\cdot\Mi{\DD{x}}^{\,-1}\cdot E_{\DD{x}}\alpha_x 
\\&= - (\alpha_x-{\alv}_{\DD{x}}\cdot\Mi{\DD{x}}^{\,-1}\cdot{E_{\DD{x}}})(\alpha_0-\alpha_x).
\end{split}
\end{equation*}
\end{proof}

Now we prove the main Theorem. We proceed by induction on cardinality of poset.
If $\Po=\{x\}$, then $\dim \R{\alv}{\Po} = \dim \Gr(\alpha_x,\alpha_0) = \alpha_x(\alpha_0-\alpha_x) $. On the other hand, 
    \begin{eqnarray*}
    	\dim \Gl(\alpha_0)-\Q{\Po}(\alv) &=& {\alpha_0}^2 -\begin{bmatrix}\alpha_0 &\alpha_x \end{bmatrix}\cdot \begin{bmatrix}1 & 0\\ 1 & 1\end{bmatrix}^{\,-1} \cdot\begin{bmatrix}\alpha_0 \\\alpha_x \end{bmatrix} 
    \\&=& {\alpha_0}^2 - (\alpha_0 - \alpha_x)\alpha_0 - {\alpha_x}^2= \alpha_x(\alpha_0-\alpha_x).
\end{eqnarray*}
Suppose that $|\Po|>1$. Let $x\in \Po$ be a maximal element. Consider the natural map $\pi:\R{\alv}{\Po} \to \R{\alv_{\Po\setminus\{x\}}}{\Po \setminus \{x\}}$, which maps a tuple $(V_s)_{s\in \Po}$ to $(V_s)_{s\in \Po\setminus\{x\}}$. Map $\pi$ is surjective, as $x$ is maximal. Generic fiber of $\pi$ has the form $$\Gr(\alpha_x-X,\alpha_0-X),$$ in which $X$ is a general dimension of the sum of subspaces $V_y$ over all $y\to x$ in $\Po$, and we have
\begin{equation} \label{equalityT1}
\dim \R{\alv}{\Po} =  
\dim \Gr(\alpha_x- X,\alpha_0-X)+\dim\R{\alv_{\Po\setminus \{x\}}}{\Po \setminus \{x\}}.
\end{equation}
Applying Lemma \ref{sum} for the poset $\DD{x}$ and dimension vector 
${\alv}_{\DD{x}}$, we have that $X={\alv}_{\DD{x}}\Mi{\DD{x}}^{\,-1}{E_{\DD{x}}}$. Therefore
\begin{equation} \label{equalityT2}
	\dim \Gr(\alpha_x-X,\alpha_0-X)=(\alpha_x-{\alv}_{\DD{x}}\cdot\Mi{\DD{x}}^{\,-1}\cdot{E_{\DD{x}}})(\alpha_0-\alpha_x).
\end{equation}
By induction hypothesis we have
\begin{equation}\label{equalityT3}
\dim \R{\alv_{\Po\setminus \{x\}}}{\Po \setminus \{x\}}=\dim \Gl(\alpha_0)-\Q{\Po\setminus\{x\}}(\alv_{\Po\setminus\{x\}}).
\end{equation}
Hence (combining \eqref{equalityT1}-\eqref{equalityT3}), 
\begin{equation*}
\dim \Gl(\alpha_0)-\dim \R{\alv}{\Po}=\Q{\Po\setminus\{x\}}(\alv_{\Po\setminus\{x\}})-(\alpha_x-{\alv}_{\DD{x}}\cdot\Mi{\DD{x}}^{\,-1}\cdot{E_{\DD{x}}})(\alpha_0-\alpha_x).
\end{equation*}
By Lemma \ref{lem_alg} the right hand side of last equality equals $\Q{\Po}(\alv)$ and we are done. 

\section{Examples and consequences}

\begin{ex}
We will illustrate the steps in the proof of the main Theorem on the following example. Consider the poset $\Po=\{1,2,3,4,5,6,7\}$ with the order $1\prec 3, 1\prec 4, 1\prec 5, 2\prec 4, 2 \prec 5, 3\prec 6, 3\prec 7, 4\prec 6, 4 \prec 7, 5\prec 7$ and the dimension vector $\alv=(8;1,2,2,4,5,6,7)$. 
Element $6\in \Po$ is maximal with $\alpha_6=6$. Calculating $X$ as in the proof of the main Theorem we get $X=5$ (as $X$ is a general sum of subspaces with dimension $\alpha_3=2$, $\alpha_4=4$ having common subspace of dimension $1$). In this case the equality \eqref{equalityT1} has the following graphical form (where the components of dimension vectors we place at corresponding vertices of Hasse diagrams):
\[ %%FIGURA DO MAIOR PRO MENOR
\xymatrix@=0.5pc{
&& {8} &&&&&&& {8} 
\\ \\
& {7}  \ar@{-}[uur]  && {6} \ar@{-}[uul] &&&&&&  {7} \ar@{-}[uu]
\\ \\
{5} \ar@{-}[uur] && {4} \ar@{-}[uul] \ar@{-}[uur] && {2} \ar@{-}[uul] \ar@{-}[uulll] 
& = &  {8-5} & +  & {5}  \ar@{-}[uur]  && {4} \ar@{-}[uul]  && {2} \ar@{-}[uulll]
 \\ \\
& {2} \ar@{-}[uul] \ar@{-}[uur] && {1} \ar@{-}[uur] \ar@{-}[uul] \ar@{-}[uulll]  & & & 6-5 \ar@{-}[uu] &&& {2} \ar@{-}[uul] \ar@{-}[uur]  && {1} \ar@{-}[uur] \ar@{-}[uul] \ar@{-}[uulll]
}
\]  %%FIGURA DO MAIOR PRO MENOR - FIM
The induction hypothesis yields  
$$\dim \R{\alv_{\Po\setminus \{6\}}}{\Po\setminus \{6\}}=8^2-\Q{{\Po\setminus \{6\}}}(\alv_{\Po\setminus \{6\}})=35.$$ On the other hand, $\dim \Gr(6-5,8-5)=2$. Therefore by \eqref{equalityT1} $\dim \R{\alv}{\Po}=37$ which is exactly the value of $8^2-\Q{\Po}(\alv)$.
\end{ex}
The next example shows that the assumption $\alv$ to be admissible in Theorem is necessary.
\begin{ex}
Consider the poset $\Po=\{1,2,3,4\}$ with the order $1\prec 3, 1\prec 4, 2\prec 3, 2\prec 4$ and the dimension vector $\alv=(4;2,2,3,3)$. That is, the Hasse diagram of $\Poe$ with the integers $\alpha_s, s\in \Poe$  placed at the corresponding vertices, has the following form:
\[
\xymatrix@=0.5pc{ & {4}  &&
 \\ \\
{3} \ar@{-}[uur]   && {3} \ar@{-}[uul]
 &&
 \\ \\
{2} \ar@{-}[uu] \ar@{-}[uurr]  && {2} \ar@{-}[uu] \ar@{-}[uull] &&  }
\] 
The dimension vector $\alv$ is not admissible. One checks that the dimension of of $\R{\alv}{\Po}$ is 7, while $4^2-\Q{\Po}(\alv)=8$. 

In this case we can calculate the dimension of $\R{\alv}{\Po}$ via the Euler form of enlarged poset. Namely, let $\widetilde \Po=\Po\cup\{5\}$, where $5$ is a minimal point, and let 
$\widetilde \alv=(4;1,2,2,3,3)$. 
That is, the corresponding Hasse diagram is 
\[
\xymatrix@=0.5pc{ & {4}  &&
 \\ \\
{3} \ar@{-}[uur]   && {3} \ar@{-}[uul]
 &&
 \\ \\
{2} \ar@{-}[uu] \ar@{-}[uurr]  && {2} \ar@{-}[uu] \ar@{-}[uull] && 
\\ \\ 
& {1} \ar@{-}[uur] \ar@{-}[uul] &&
}
\] 
For any point $(V_s)_{s\in \Po}\in \R{\alv}{\Po}$ the intersection between $V_1$ and $V_2$ contains a one dimensional subspace, therefore $\dim \R{\alv}{\Po}=\dim\R{\widetilde \alv}{\widetilde \Po}$. 
As $\widetilde \alv$ is an admissible dimension vector for $\widetilde \Po$, we have that $\dim\R{\widetilde \alv}{\widetilde \Po}=4^2-\Q{\widetilde \Po}(\widetilde \alv)=7$.
\end{ex}
Nevertheless there are non-admissible vector such that \eqref{eqMain} holds.
\begin{ex}
Consider the poset $\Po=\{1,2,3\}$ with the order $1\prec 3, 2\prec 3$ and the dimension vector $\alv=(4;2,2,3)$. That is, the Hasse diagram of $\Poe$ has the form:
\[
\xymatrix@=0.5pc{ & {4}  &
 \\ \\
&{3} \ar@{-}[uu]  &
 \\ \\
{2}  \ar@{-}[uur]  && {2}  \ar@{-}[uul] &&  }
\] 
The dimension vector $\alv$ is not admissible. In the contrast to the previous example, in this case 
$$\dim \R{\alv}{\Po}=7=4^2-\Q{\Po}(\alv).$$
\end{ex}

We show several consequences of equation \eqref{eqMain} for representations of posets. Recall that a \textit{subspace representation} of $\Po$ is a tuple $\V=(V_0;V_s)_{s\in \Po}$ of vector subspaces 
$V_s$ of a vector space $V_0$ such that $V_s\subseteq V_t,$ whenever $s\prec t$ in $\Po$ (for more details see \citep[Chapter 5]{simson}). The \textit{dimension vector} of $\V$ is defined as $\dimv\, \V=(\dim V_0,\dim V_s)_{s\in \Po}$. The variety $\R{\alv}{\Po}$ can be viewed as a set of all subspace representations of $\Po$ having the dimension vector $\alv$. 
The group $\Gl(\alpha_0)$ acts on $\R{\alv}{\Po}$  via the base change so that the orbits of this action are in a bijection with the isomorphisms classes of subspace
representations of $\Po$ with the dimension $\alv$ (the reader is refered to \cite{futyus}, where the authors studied the corresponding moduli space of this action).

Following \cite{mwz} we say that an admissible dimension vector $\alv$ is of \textit{finite type} if the group $\Gl(\alpha_0)$ has only finitely many orbits on variety $\R{\alv}{\Po}$. We say that a non-zero $\alv' \in \A{\Po}$ is a \textit{summand} of $\alv \in \A{\Po}$ if 
$\alv-\alv'\in \A{\Po}$. It follows
from the Krull-Schmidt theorem that if $\alv$ is of finite type, then every summand of
$\alv$ is also of finite type. Theorem 2.2 from \cite{mwz} can be treated as a classification of dimensions of finite type for primitive posets (this result was generalized in \cite{drozdKub} for any poset). It trivially follows that if ${\Po}$ is a poset of finite representation type (that is, there exist only finitely many of indecomposable subspace representations of $\Po$), then any admissible dimension vector $\alv$ is of finite type. Classification of posets of finite representation type is given in \cite{kle}.

\begin{prop}
Let $\alv$ be an admissible vector of finite type so that $Q_{\Po}(\alv)=1$, then there exists a Schurian representation
of $\Po$ with dimension vector $\alv$.
\end{prop}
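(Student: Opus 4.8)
The plan is to use the main Theorem \eqref{eqMain} to translate the numerical hypothesis $Q_{\Po}(\alv)=1$ into geometric information about the $\Gl(\alpha_0)$-action on $\R{\alv}{\Po}$, and then argue that a generic point of this variety gives a Schurian representation. Indeed, by \eqref{eqMain} we have $\dim\R{\alv}{\Po}=\dim\Gl(\alpha_0)-1$, so the orbit of a generic point $\V=(V_0;V_s)_{s\in\Po}$ has dimension at most $\dim\Gl(\alpha_0)-1=\dim\R{\alv}{\Po}$. Since $\alv$ is of finite type, $\R{\alv}{\Po}$ is a finite union of orbits, hence it is irreducible if and only if it is a single orbit closure; in any case its unique dense orbit $O_\V$ has dimension exactly $\dim\R{\alv}{\Po}$, so $\dim O_\V=\dim\Gl(\alpha_0)-1$. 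The stabilizer $\mathrm{Stab}(\V)\subseteq\Gl(\alpha_0)$ therefore has dimension $1$.

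Next I would identify this stabilizer. The scalar matrices $\mathbb{G}_m\cdot\Id\subseteq\Gl(\alpha_0)$ act trivially on every subspace tuple, so $\mathbb{G}_m\cdot\Id\subseteq\mathrm{Stab}(\V)$; this is a one-dimensional subgroup. Combined with $\dim\mathrm{Stab}(\V)=1$, we conclude that $\mathrm{Stab}(\V)^{\circ}=\mathbb{G}_m\cdot\Id$, i.e. the connected component of the identity of the stabilizer consists precisely of the scalars. Now $\mathrm{Stab}(\V)$ is, as a variety, an open subset of the automorphism algebra $\End_{\Po}(\V)=\{\,\varphi\in\End(V_0)\ :\ \varphi(V_s)\subseteq V_s\ \text{for all}\ s\,\}$ — indeed $\mathrm{Aut}_{\Po}(\V)$ is the unit group of this finite-dimensional associative algebra and hence Zariski-dense in it (the unit group of a finite-dimensional algebra over an algebraically closed field is dense). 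Therefore $\dim_{\mathrm{Zar}}\mathrm{Stab}(\V)=\dim_{k}\End_{\Po}(\V)$, which forces $\dim_k\End_{\Po}(\V)=1$, that is $\End_{\Po}(\V)=k\cdot\Id$. A subspace representation with $\End_{\Po}(\V)=k$ is by definition Schurian, so $\V$ is the desired representation with dimension vector $\alv$.

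The main obstacle is the passage from ``the orbit $O_\V$ is dense in $\R{\alv}{\Po}$'' to ``$\dim O_\V=\dim\R{\alv}{\Po}$''; this is where finiteness of type enters essentially. Since $\R{\alv}{\Po}$ need not be irreducible a priori, one should pick $\V$ in an irreducible component of maximal dimension and observe that, because this component is a finite union of $\Gl(\alpha_0)$-orbits (orbits being locally closed, and irreducibility of the component together with finiteness forcing one orbit to be dense in it), the generic orbit in that component has the full dimension of $\R{\alv}{\Po}$. A secondary point requiring a line of care is the identification of the Zariski dimension of $\mathrm{Aut}_{\Po}(\V)$ with $\dim_k\End_{\Po}(\V)$: this rests on $\mathrm{Aut}_{\Po}(\V)$ being a nonempty Zariski-open subset of the affine space $\End_{\Po}(\V)$ (it contains $\Id$ and is the complement of the determinant-type hypersurface cut out inside that algebra), hence dense of the same dimension. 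Everything else is a direct bookkeeping of dimensions via \eqref{eqMain}.
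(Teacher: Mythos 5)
Your proposal is correct and follows essentially the same route as the paper: use finiteness of type to produce a dense orbit, apply the main Theorem to compute the stabilizer dimension as $Q_{\Po}(\alv)=1$, and identify $\dim \mathrm{Stab}_{\Gl(\alpha_0)}(\V)$ with $\dim_k\End(\V)$ via the automorphism group being open dense in the endomorphism algebra. The paper's version is just a terser statement of the same chain of equalities; your extra care about irreducible components and the scalar subgroup fills in details the paper leaves implicit.
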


\begin{proof}
Recall that a representation $\V$ is called Schurian if $\End(\V)$ is one dimensional. As $\alv$ is of finite type, hence the variety $\R{\alv}{\Po}$ has a dense open (in Zarisky topology) $\Gl(\alpha_0)$ orbit $\mathcal O$. Let $\V$ be any representative from $\mathcal O$, hence 
$$
\dim \End(\V)=\dim \textrm{Stab}_{\Gl(\alpha_0)}\V=\dim \Gl(\alpha_0)-\dim \R{\alv}{\Po}=Q_{\Po}(\alv).
$$
Therefore $\V$ is Schurian.
\end{proof}

\begin{prop}
If dimension vector $\alv$ is of finite type, then 
$Q_{\Po}(\alv')\geq 1$ for any summand $\alv'$ of $\alv$.
\end{prop}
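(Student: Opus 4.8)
The plan is to exploit the orbit--stabilizer dictionary together with the main Theorem, exactly in the spirit of the previous Proposition, but now applied to the \emph{summand} $\alv'$ rather than to $\alv$ itself. Since $\alv$ is of finite type and $\alv'$ is a summand of $\alv$, the Krull--Schmidt remark quoted before the Proposition shows $\alv'$ is also of finite type. Hence the variety $\R{\alv'}{\Po}$ carries a dense open $\Gl(\alpha'_0)$-orbit $\mathcal{O}'$; pick any $\V'\in\mathcal{O}'$. Then, just as in the proof of the preceding Proposition,
\begin{equation*}
\dim \End(\V') = \dim \textrm{Stab}_{\Gl(\alpha'_0)}\V' = \dim \Gl(\alpha'_0) - \dim \R{\alv'}{\Po} = Q_{\Po}(\alv'),
\end{equation*}
where the last equality is the main Theorem applied to the admissible vector $\alv'$. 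The first step of the argument is therefore to record that $Q_{\Po}(\alv')=\dim\End(\V')$ for a representation $\V'$ lying in the dense orbit.

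The second step is to conclude $Q_{\Po}(\alv')\geq 1$ from this identity. This is immediate: $\End(\V')$ always contains the scalar endomorphisms, so $\dim\End(\V')\geq 1$ for any nonzero representation, and $\V'$ is nonzero because $\alv'$ is by definition a nonzero element of $\A{\Po}$ (indeed $\alpha'_0>0$, since otherwise all the $\alpha'_s$ vanish too and $\alv'=0$). Hence $Q_{\Po}(\alv')\geq 1$, as claimed.

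I expect no serious obstacle here; the content is essentially a bookkeeping exercise on top of the machinery already set up. The one point that deserves a line of care is the passage ``$\alv$ of finite type $\Rightarrow$ $\alv'$ of finite type,'' which the paper attributes to Krull--Schmidt in the paragraph preceding the Proposition: one should make sure this is invoked explicitly so that the dense-orbit hypothesis is legitimately available for $\R{\alv'}{\Po}$. Everything else — surjectivity of nothing, genericity arguments — is inherited verbatim from the proof of the previous Proposition and from the main Theorem, which applies since $\alv'\in\A{\Po}$ is admissible by definition of ``summand.''
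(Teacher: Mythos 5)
Your proposal is correct and follows essentially the same route as the paper: both reduce to the observation (via the Krull--Schmidt remark) that the summand $\alv'$ is again of finite type, and then combine the main Theorem with the fact that the scalar matrices lie in every stabilizer. The only cosmetic difference is that the paper bounds $\dim\R{\alv'}{\Po}\le\dim\Gl(\alpha_0)-1$ directly from the finiteness of the orbit set (so it does not need the dense open orbit), whereas you route the argument through $Q_{\Po}(\alv')=\dim\End(\V')\ge 1$ for a generic $\V'$, exactly as in the preceding Proposition.
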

\begin{proof}
As the one-dimensional subgroup of scalar matrices in $\Gl(\alpha_0)$ acts trivially on $\R{\alv}{\Po}$, hence
$\dim \Gl(\alpha_0)-1\geq \dim \R{\alv}{\Po}$. Therefore $Q_{\Po}(\alv)\geq 1$. Any summand of $\alv'$ of $\alv$ is of finite type as well, therefore $Q_{\Po}(\alv')\geq 1$.
\end{proof}
The \textit{coordinate dimension vector} of $\V$ is defined as $\cdn\, \V=(c_0,c_s)_{s\in \Po}$, where $c_0=\dim V_0$, $c_s=\dim (V_s/\sum_{t\prec s} V_t)$. A representation $\V$ is said to be \textit{coordinate} if for any $s\in \Po$ the sum $\sum_{t\to s} V_t$ is direct. If $\alv\in \A{\Po}$ is of finite type, then a general representation of $\Po$ of dimension $\alv$ is coordinate. Using this fact, and taking into account the relation between $\QT{\Po}$ and $\Q{\Po}$ via the relation between $\cdn$ and $\dimv$ (see Preliminaries, and \cite[Proposition 1.1]{futyus}) we have the following analogue of Theorem 1 from \cite{drozd} formulated in terms of variety $\R{\alv}{\Po}$ and the Euler quadratic form $\Q{\Po}(\alv)$.
\newpage
\begin{prop}
Suppose that $\Q{\Po}(\alv)>0$ for any non-zero $\alv \in \A{\Po}$. Then:
\begin{itemize}
\item[(a)] the poset $\Po$ is of finite representation type;
\item[(b)] if $\Q{\Po}(\alv)=1$, then there exists unique (up to isomorphism) indecomposable subspace representation of $\Po$ having dimension $\alv$;
\item[(c)] if $\Q{\Po}(\alv)>1$, then there is no indecomposable subspace representation with dimension vector $\alv$. 
\end{itemize}
\end{prop}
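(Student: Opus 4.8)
The plan is to deduce all three parts from the main Theorem \eqref{eqMain}, the finiteness criterion for representation type, and standard facts about orbits and quadratic forms, using the relation between the Euler form $\Q{\Po}$ and the Tits form $\QT{\Po}$ recorded in the Preliminaries. First I would establish (a): if $\Q{\Po}(\alv')>0$ for every nonzero $\alv'\in\A{\Po}$, then in particular $\Q{\Po}$ is positive on all coordinate dimension vectors, so via the change of variables converting $\cdn$ to $\dimv$ (i.e.\ multiplication by $\Mi{\Poe}$, cf.\ \cite[Proposition 1.1]{futyus}) the Tits form $\QT{\Po}$ is positive definite on the relevant cone. By Drozd's criterion (Theorem~1 of \cite{drozd}, see also \cite[Proposition 11.93]{simson}), positive definiteness of the Tits form is equivalent to $\Po$ being of finite representation type; alternatively one can argue directly that if $\Po$ had infinitely many indecomposables there would be a one-parameter family, forcing $\dim\R{\alv}{\Po}\ge\dim\Gl(\alpha_0)-1+1$ for a suitable $\alv$ and hence $\Q{\Po}(\alv)\le 0$, a contradiction.

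Next, for (b) and (c), since $\Po$ is of finite type by (a), every admissible $\alv$ is of finite type, so $\Gl(\alpha_0)$ acts on $\R{\alv}{\Po}$ with finitely many orbits, and there is a dense open orbit $\mathcal O$. For (b), with $\Q{\Po}(\alv)=1$: Proposition (the preceding one in the paper) already gives a Schurian representation $\V$ on the dense orbit; I would argue $\V$ is indecomposable (Schurian implies $\End(\V)$ has no nontrivial idempotents, since it is one-dimensional) and unique up to isomorphism because the dense orbit is unique and any other representation of dimension $\alv$ lies in the closure of $\mathcal O$, hence degenerates from $\V$ — and a dense-orbit representation in a finite-type situation where the dimension vector itself is ``indivisible'' in the monoid $\A{\Po}$ cannot decompose (if $\V=\V'\oplus\V''$ nontrivially then $\dimv\V'$, $\dimv\V''$ would be summands of $\alv$, each with $\Q{\Po}\ge 1$ by the previous Proposition, contradicting $\Q{\Po}(\alv)=\Q{\Po}(\alv')+\Q{\Po}(\alv'')+(\text{cross term})$; one checks the additivity/bilinearity forces $\Q{\Po}(\alv)\ge 2$). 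For (c), if $\Q{\Po}(\alv)>1$ but some indecomposable $\V$ had $\dimv\V=\alv$, then its orbit is the dense one (a finite-type poset has a unique indecomposable in each dimension that admits one, by Krull--Schmidt together with the orbit count), so $\dim\End(\V)=\Q{\Po}(\alv)>1$; but an indecomposable over an algebraically closed field with a representation-finite behaviour has $\End(\V)$ local, and combined with the bilinear-form estimate $\dim\Ext$-type terms being zero in the finite-type case (no self-extensions for the generic, hence only, indecomposable), one gets $\dim\End(\V)=1$, contradiction. So no indecomposable of dimension $\alv$ exists.

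The main obstacle I anticipate is making precise the passage ``$\Q{\Po}(\alv)>1 \Rightarrow$ no indecomposable'' and ``$\Q{\Po}(\alv)=1 \Rightarrow$ exactly one indecomposable'' rigorously, because this requires knowing that in the finite-type situation the generic representation of a given admissible dimension vector is rigid (has no self-extensions), so that $\dim\End=\Q{\Po}(\alv)$ pins down the number of indecomposable summands via the formula $\Q{\Po}(\alv)=\sum_i\Q{\Po}(\alv_i)+\sum_{i<j}(\text{Euler pairing of }\alv_i,\alv_j)$ together with positivity of $\Q{\Po}$ on each summand. The cleanest route is probably to invoke the known homological interpretation of $\Q{\Po}$ (the references \cite[Lema 4.1]{simsonForms}, \cite{simsonForms2}) expressing $\Q{\Po}(\alv)=\dim\Hom(\V,\V)-\dim\Ext^1(\V,\V)$ for a representation of dimension $\alv$, reducing (b) and (c) to the vanishing of $\Ext^1$ for the generic representation, which holds because the generic orbit is open.

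Remark: some of the geometric arguments above can be shortened by citing \cite{drozd} directly, since Proposition is explicitly stated as ``an analogue of Theorem~1 from \cite{drozd}''; the novelty here is only the reformulation in terms of $\R{\alv}{\Po}$ and $\Q{\Po}$, so the proof can largely consist of translating Drozd's argument through equation \eqref{eqMain} and \cite[Proposition 1.1]{futyus}.
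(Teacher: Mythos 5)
Your closing remark is in fact the paper's entire proof: the authors give no independent argument for this proposition, but derive it from Theorem~1 of \cite{drozd} (stated for the Tits form $\QT{\Po}$ and coordinate dimension vectors) using the two facts recorded immediately before the statement, namely that a general representation of a finite-type dimension vector is coordinate, and that $\QT{\Po}(\cdn\V)=\Q{\Po}(\dimv\V)$ under the change of variables by $\Mi{\Poe}$ from \cite[Proposition 1.1]{futyus}. So the intended route is exactly the translation you describe at the end, and to that extent your proposal matches the paper.

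The direct geometric argument that occupies the body of your proposal, however, has genuine gaps and should not be presented as a proof. In (c) you assume that an indecomposable $\V$ with $\dimv\V=\alv$ must lie in the dense orbit; this is unjustified and is in fact the opposite of what one expects: when $\Q{\Po}(\alv)>1$ the dense-orbit representative has $\dim\End(\V)=\Q{\Po}(\alv)>1$ and is typically decomposable, while a hypothetical indecomposable would sit in a smaller orbit, so no contradiction is reached. The parenthetical justification (``a finite-type poset has a unique indecomposable in each dimension, by Krull--Schmidt together with the orbit count'') is also circular: finitely many orbits gives finitely many isoclasses per dimension vector, not one, and uniqueness per dimension vector is essentially part (b). In (b), the uniqueness argument needs the additivity $\Q{\Po}(\alv'+\alv'')=\Q{\Po}(\alv')+\Q{\Po}(\alv'')+(\text{cross terms})$ with a controlled sign of the cross terms, which requires the homological interpretation of the Euler bilinear form and the vanishing of $\Ext^1$ between the summands of a generic point; neither a Voigt-type lemma for the variety $\R{\alv}{\Po}$ (which is not the usual module variety) nor the identity $\Q{\Po}(\alv)=\dim\End(\V)-\dim\Ext^1(\V,\V)$ is established in the paper, and both would have to be imported with care from \cite{simsonForms,simsonForms2} for the correct exact category of poset representations. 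The safe course is the one you name last: cite Drozd's theorem and translate it through \eqref{eqMain} and the $\cdn$--$\dimv$ correspondence.
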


\section*{Acknowledgement}
Part of this work was done during the visit of K.I. to Syracuse University in 2017; he would like to thank Mark Kleiner for the hospitality, very helpful discussions and comments. K.I. is supported in part by Fapesp grants 2014/09310-5,
2015/00116-4 and by CNPq grant 456698/2014-0.

\end{document}